\theoremstyle{plain}
\newtheorem{theorem}{Theorem}
\newtheorem{lemma}[theorem]{Lemma}
\newtheorem{cor}[theorem]{Corollary}
\theoremstyle{definition}
\newtheorem{dnt}[theorem]{Definition}
\providecommand{\figwidth}{6cm}
\providecommand{\fullwidth}{12cm}
\title{Parity conditions for one-way rail networks}
\author{Dai Akita\thanks{The University of Tokyo, 7-3-1 Hongo Bunkyo-ku, Tokyo 113-8656, Japan. \texttt{d.akita@ne.t.u-tokyo.ac.jp}}
\and Daniel Schenz\thanks{Kobe University, 1-1 Rokkodai-cho, Nada-ku, Kobe, Japan.}}
\date{}
\begin{document}
\maketitle

\begin{abstract}
We present parity conditions under which a toy rail network is one-way,
i.e., whether a direction can be assigned across the network so that all train journeys are completely consistent with it
or completely consistent with its opposite.
We show that this problem is equivalent to determining the balance of a signed graph obtained from the network,
whose edges are assigned positive or negative signs.
Using signed-graph theory, we derive two equivalent parity conditions for one-wayness:
(i) every cycle must contain an even number of edges that join the same sides of switches,
and (ii) every cycle must contain an even number of angles at switches.
Signed-graph theory also offers an analytical criterion:
A connected network is one-way if and only if the smallest eigenvalue of its signed Laplacian matrix is zero,
suggesting a computational tool for evaluating one-wayness.
\end{abstract}

\section{Introduction}
\label{sec:intro}

Toy railroads (see Figure \ref{fig:toy}) are simple toys, 
yet they offer long hours of fun for children, adults, and mathematicians alike.
Indeed, several researchers have studied mathematical problems derived from toy trains.
Demaine et al. \cite{https://doi.org/10.48550/arxiv.1905.00518} considered the question
whether a train with finite length can move from one configuration to another
and analyzed the hardness of the problem using nondeterministic constraint logic,
which represents reconfiguration problems with constraint graphs.
Mitani \cite{mitani2018study} examined railway lines composed of circular arcs
and straight segments without branching.
He proved that any point in the plane can be reached by concatenating these elements,
and also studied how many pieces are required to form closed paths.

Garmo \cite{garmo1996random} considered toy railroads that consist of straight tracks,
curved tracks, switches, and bridges.
The existence of bridges opens up the possibility of non-planar networks.
Furthermore, he required that railroads be closed, i.e., they do not have dead ends.
In his model, a train moves forward along the rails and is never allowed to reverse;
in particular, it cannot travel directly between the two branch ends of a Y-shaped switch.
Such rail networks have several interesting characteristics (Figure~\ref{fig:example}).
In a \emph{functioning network} (Figure~\ref{fig:example}a) a train that starts at any point in any direction
can reach every other point, whereas in a \emph{malfunctioning network} (Figure~\ref{fig:example}b),
there are points that cannot be reached starting from certain other points going into a certain direction.
A \emph{one-way rail network} (Figure~\ref{fig:example}c) is one in which we can assign a fixed direction
(e.g., as shown by the arrows) to all pieces of track such that the direction of any possible train journey
through the network is either completely consistent with that direction or completely consistent with its opposite.
In a \emph{two-way rail network} (Figure~\ref{fig:example}d), on the other hand, trying to consistently assign
such a direction to all tracks will invariably lead to two opposing directions meeting at some point in the network,
so that a train journey across that point would not be completely consistent with either orientation.

\begin{figure}[tbp]
    \centering
    \includegraphics[width=\figwidth]{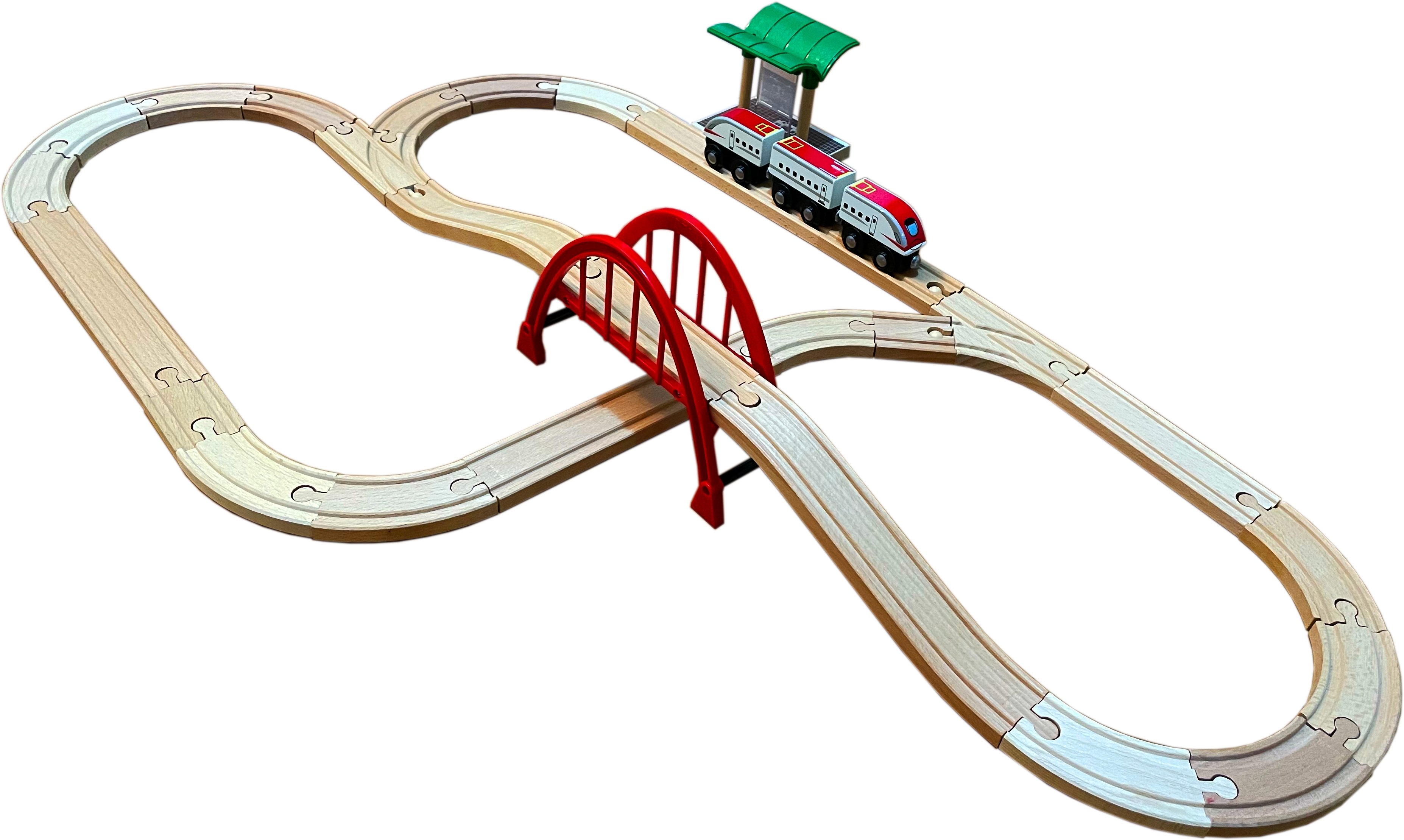}
    \caption{We consider toy railroads that consist of straight tracks, curved tracks, switches, and bridges.
    The existence of bridges opens up the possibility of non-planar networks.
    Furthermore, we require that railroads be closed, i.e., they do not have dead ends.}
    \label{fig:toy}
\end{figure}

\begin{figure}[tbp]
    \centering
    \includegraphics[width=9cm]{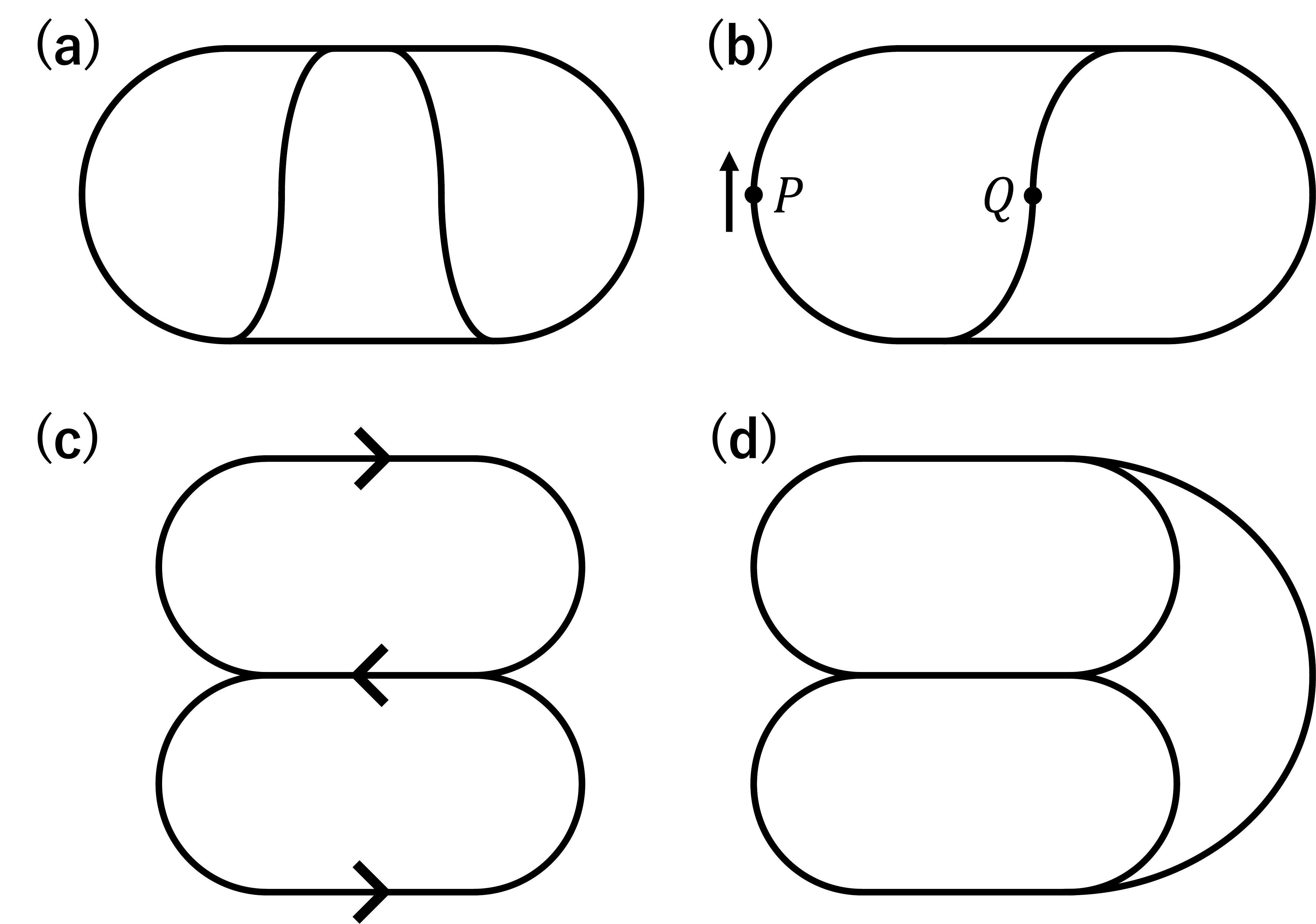}
    \caption{Characteristics of a rail network.
    (a) Functioning network: A train starting from any point and in any direction
    (although without allowing reversal of the direction of movement) can reach every other point.
    (b) Malfunctioning network: There exist a starting point and direction from which some points
    in the network cannot be reached.
    For example, if we start from $P$ clockwise, we cannot arrive at $Q$.
    (c) One-way network: We can assign a fixed direction (e.g., as shown by the arrows) to all pieces of track
    such that the direction of any possible train journey through the network is either completely consistent
    with that direction or completely consistent with its opposite.
    (d) Two-way network: Not a one-way network, i.e., it is not possible to assign a fixed direction
    that is consistent with all possible train journeys.}
    \label{fig:example}
\end{figure}

Garmo \cite{garmo1996random} formally treated these characteristics of rail networks
and considered the probability that a random rail network possesses them.
To compute these probabilities, he modeled rail networks as 3-regular multigraphs,
defined the properties mathematically,
and showed that the probabilities of being functioning and one-way converge asymptotically to 1/3 and 0, respectively,
as the network size tends to infinity.
Although he derived a necessary and sufficient geometric condition
for functioning — namely, the absence of an \emph{absorbing $(k,2l)$-configuration} — a graph-theoretic
condition for one-wayness has not been identified.

Here, we focus on the properties of a one-way rail network through the lens of signed-graph theory.
A \emph{signed graph}, introduced by Harary \cite{Harary1953}, is a graph
whose edges carry either a positive (+) or a negative (–) sign.
A signed graph is \emph{balanced} if every cycle contains an even number of negative edges.
The \emph{structure theorem} states that a signed graph is balanced
if and only if its vertex set can be partitioned into two disjoint subsets
such that positive edges join vertices within the same subset,
whereas negative edges join vertices across the two subsets.
Signed graphs have been widely applied to the analysis of social relationships
\cite{cartwright1956structural,10.1145/1753326.1753532,FacchettiIaconoAltafini2011,TraagDoreianMrvar2019}.

As we show in Theorem~\ref{th:onewaysigned}, a rail network is one-way precisely
when the signed graph derived from it is balanced.
This correspondence yields two equivalent parity conditions:
(i) every cycle must contain an even number of edges that join the same sides of switches,
and (ii) every cycle must contain an even number of angles at switches.
In Section \ref{sec:preliminaries} we formalize rail networks and one-wayness in graph-theoretic terms.
Section \ref{sec:signed} then establishes their equivalence to balance in the associated signed graph.

\section{Preliminaries}
\label{sec:preliminaries}

We formally define a rail network in a manner similar to that of Garmo~\cite{garmo1996random},
beginning with the notion of a switch — the basic building block of a rail network (Figure~\ref{fig:branch_graph}).

\begin{dnt}
A \emph{switch} is the graph $S(P, R)$ such that
\begin{equation}
    P = \{ s, b, b' \},
\end{equation}
\begin{equation}
    R = \{ r = sb, r' = sb' \}.
\end{equation}
We call the vertex $s$ the \emph{stem end}, and $b$ and $b'$ the \emph{branch ends}.
The edges $r$ and $r'$ are called \emph{branch rails}.
\end{dnt}

A rail network is a graph in which switches are connected to each other.

\begin{dnt}
Given a positive integer $N$ and $2N$ distinct switches
$S_i(P_i, R_i)$ for $i=1,\dots,2N$, let
$V = \bigcup_i P_i$ and $R = \bigcup_i R_i$.
Let $T = \{t_1,\ldots,t_{3N}\}$ be a set of unordered pairs of distinct vertices among $V$.
Then, with $E = R \cup T$, a graph $G(V, E)$ is called a \emph{rail network}
if $T$ is a perfect matching on $(V, T)$.
The edges in $T$ are called \emph{tracks}.
\end{dnt}

Note that a rail network is not necessarily simple:
For example, if a track $t$ connects a branch end and the stem end of the same switch,
$t$ is an additional edge between those vertices, parallel to the branch rail of the switch.

The distinctive property of a rail network is that not all transitions through a vertex are allowed:
At any given switch, a train may not move directly from one branch rail through $s$ to the other branch rail of that switch.
Consequently, a train can never traverse two branch rails in succession.
One can also confirm that a train cannot traverse two tracks successively. 
Motivated by these observations, we define the valid transitions of a train as a \emph{journey}.

\begin{figure}[tbp]
    \centering
    \includegraphics[width=\fullwidth]{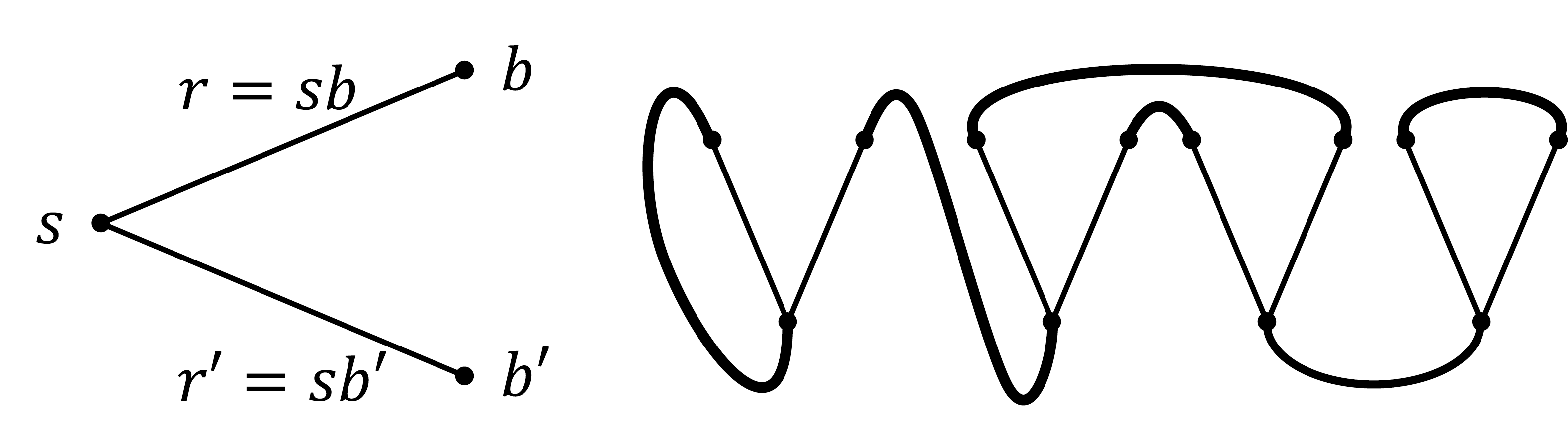}
    \caption{A switch (left) and a rail network (right). Thick edges in the rail network indicate tracks.}
    \label{fig:branch_graph}
\end{figure}

\begin{dnt}
A walk $j = v_0, e_1, v_1, \dots, e_n, v_n$ in a rail network $G(V,E)$ is called a \emph{journey}
if $j$ alternately traverses branch rails and tracks.
\end{dnt}

The main theme of this study is the \emph{one-way} rail network,
namely a network that admits an orientation
making every journey consistent with that orientation (or with its complete reversal).

\begin{dnt}
\label{def:orientation}
An orientation $\mathcal{O}$ on a rail network $G(V,E)$ is called a \emph{rail orientation}
if, in the digraph $D(V,\mathcal{O}(E))$, every branch end $b\in V$ has in-degree~1 and out-degree~1.
\end{dnt}

\begin{figure}[tbp]
    \centering
    \includegraphics[width=\fullwidth]{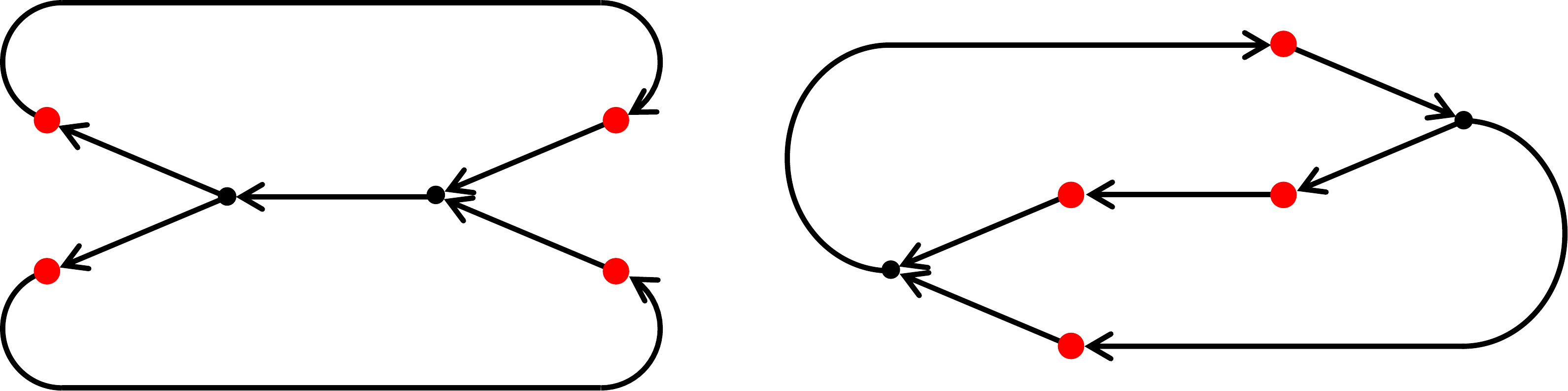}
    \caption{Rail orientations: Every branch end (large red dot) has in- and out-degree~1.}
    \label{fig:orientation}
\end{figure}

Let $\overline{\mathcal{O}}$ denote the reversal of~$\mathcal{O}$;
note that $\overline{\mathcal{O}}$ is also a rail orientation. 
For an arc $a$ in a digraph we write $\operatorname{head}(a)$ and $\operatorname{tail}(a)$ for its head and tail, respectively. 
We next define when a journey is consistent with a rail orientation.

\begin{dnt}
\label{def:consistent}
Given a journey $j = v_0 e_1 v_1 \cdots e_n v_n$ and a rail orientation $\mathcal{O}$, if
\begin{equation}
  \forall i \in \{1, \dots, n\}\quad \operatorname{head}\bigl(\mathcal{O}(e_i)\bigr)=v_i,
\end{equation}
then $j$ is said to be \emph{consistent} with $\mathcal{O}$.
\end{dnt}

We are now ready to give a formal definition of a one-way rail network.

\begin{dnt}
\label{def:oneway}
A rail network $G(V,E)$ is called \emph{one-way} if there exists a rail orientation $\mathcal{O}$
such that every journey $j$ is consistent with either $\mathcal{O}$ or its reversal $\overline{\mathcal{O}}$. 
Such an orientation $\mathcal{O}$ is referred to as a \emph{one-way orientation}. 
A rail network that is not one-way is called \emph{two-way}.
\end{dnt}

Thus, in a one-way rail network, every journey can be regarded as a directed walk
in the oriented digraph $D(V,\mathcal{O}(E))$ or in its reversal $\overline{D}(V,\overline{\mathcal{O}}(E))$. 
Because a journey traverses each track in only one direction,
a train cannot return to its starting point via the opposite direction.

We define a one-way rail network differently from Garmo~\cite{garmo1996random}.
His definition is inspired by the physical realization of wooden toy train tracks,
in which each rail piece has a \emph{plug} at one end and an \emph{antiplug} at the other,
so that a plug can be inserted into a neighbouring antiplug.
Garmo called a switch \emph{upward} when both branch ends have antiplugs and the stem end has a plug;
conversely, a switch is \emph{downward} when both branch ends have plugs and the stem end has an antiplug (Figure~\ref{fig:updown}).
Note that in a one-way rail network the numbers of upward and downward switches are equal,
because the total numbers of plugs and antiplugs must coincide.
Other switch types are possible — for example,
all three ends having the same fitting or the two branches having different fittings — but in Garmo’s framework
a rail network is one-way precisely when it can be built solely from upward and downward switches.

If we represent antiplugs by the heads of arcs and plugs by their tails,
such a one-way rail network can be described by an orientation.
Accordingly, we reinterpret upward and downward switches as follows.

\begin{figure}[tbp]
    \centering
    \includegraphics[width=8cm]{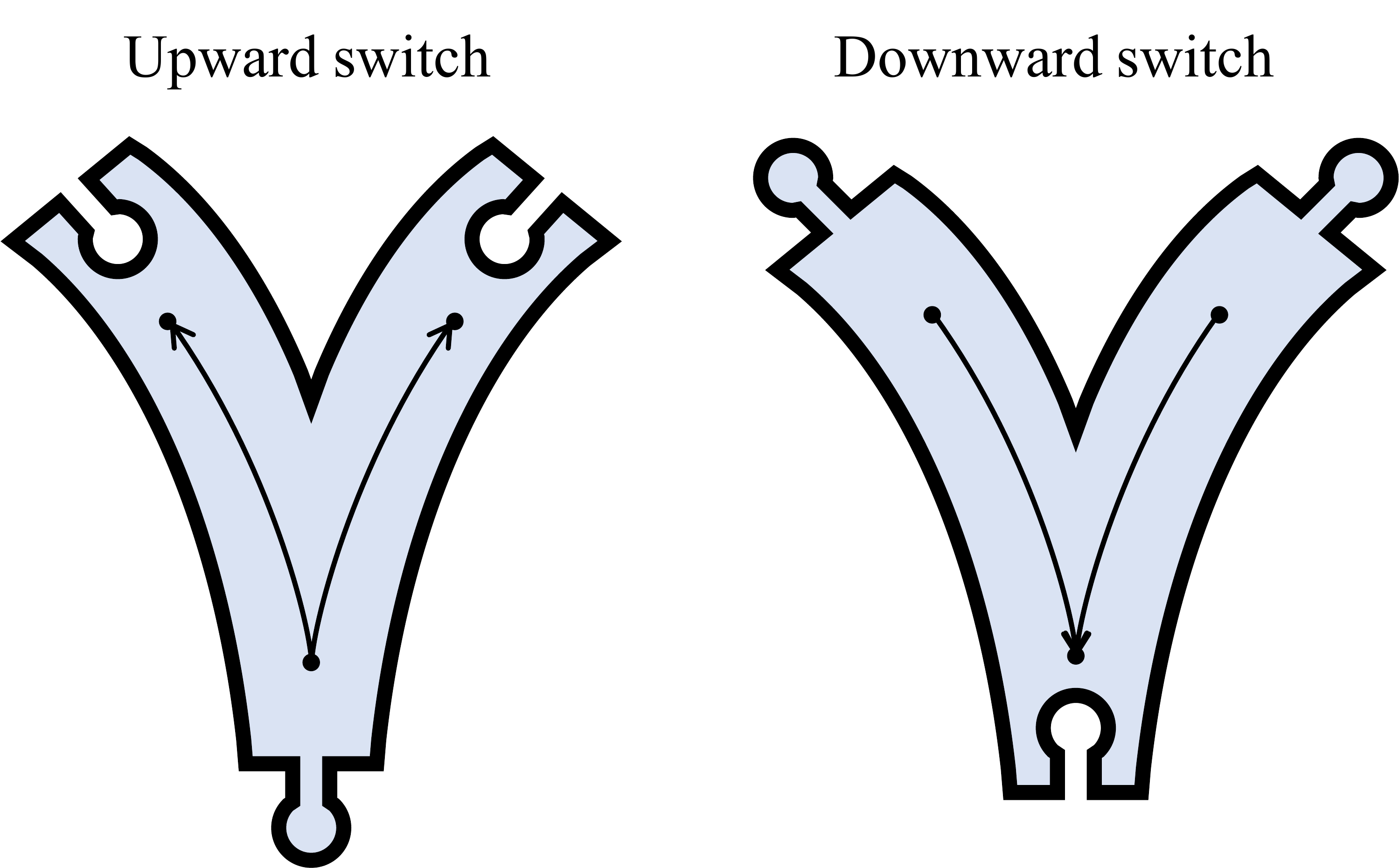}
    \caption{Upward and downward switches in the sense of Garmo~\cite{garmo1996random}. 
    In our terminology they correspond to upward and downward orientations.}
    \label{fig:updown}
\end{figure}

\begin{dnt}
\label{def:updown}
Let $G(V,E)$ be a rail network with rail orientation $\mathcal{O}$, and let
$S=(\{s,b,b'\},\{r,r'\})$
be a switch whose stem end $s$ is incident to a track $t_s$.
Then $S$ is said to be \emph{upward under $\mathcal{O}$} if
\begin{equation}
  \operatorname{head}\left( \mathcal{O}(t_s) \right)
  = \operatorname{tail}\left( \mathcal{O}(r) \right)
  = \operatorname{tail}\left( \mathcal{O}(r') \right) = s,
\end{equation}
and \emph{downward under $\mathcal{O}$} if
\begin{equation}
  \operatorname{tail}\left( \mathcal{O}(t_s) \right)
  = \operatorname{head}\left( \mathcal{O}(r) \right)
  = \operatorname{head}\left( \mathcal{O}(r') \right) = s.
\end{equation}
When the orientation is clear from context, we omit the phrase “under~$\mathcal{O}$.”
\end{dnt}

To compare our definition with Garmo’s, we first state a simple lemma.

\begin{lemma}
\label{lem:ss}
Let $\mathcal{O}$ be an arbitrary rail orientation.
If a journey $j$ does not visit any stem ends along the way,
$j$ is consistent with $\mathcal{O}$ or $\overline{\mathcal{O}}$.
\end{lemma}

\begin{proof}
Assume that $j$ runs between two stem ends $s$ and $s'$.
Because $j$ traverses branch rails and tracks alternately, only three patterns are possible (see Figure~\ref{fig:ss}):
\begin{enumerate}
\item $j = s t s'$,
\item $j = s r b t s'$,
\item $j = s r b t b' r' s'$,
\end{enumerate}
where $t$ is a track, $r$ and $r'$ are branch rails, and $b$ and $b'$ are branch ends.
Pattern 1 is trivially consistent with $\mathcal{O}$ or $\overline{\mathcal{O}}$.
For patterns 2 and 3, each branch end has in-degree 1 and out-degree 1 under any rail orientation, so the consistency holds.
If $j$ starts or ends at a branch end, it is a subsequence of pattern 3 and is therefore consistent as well.
\end{proof}

\begin{figure}[tbp]
    \centering
    \includegraphics[width=8cm]{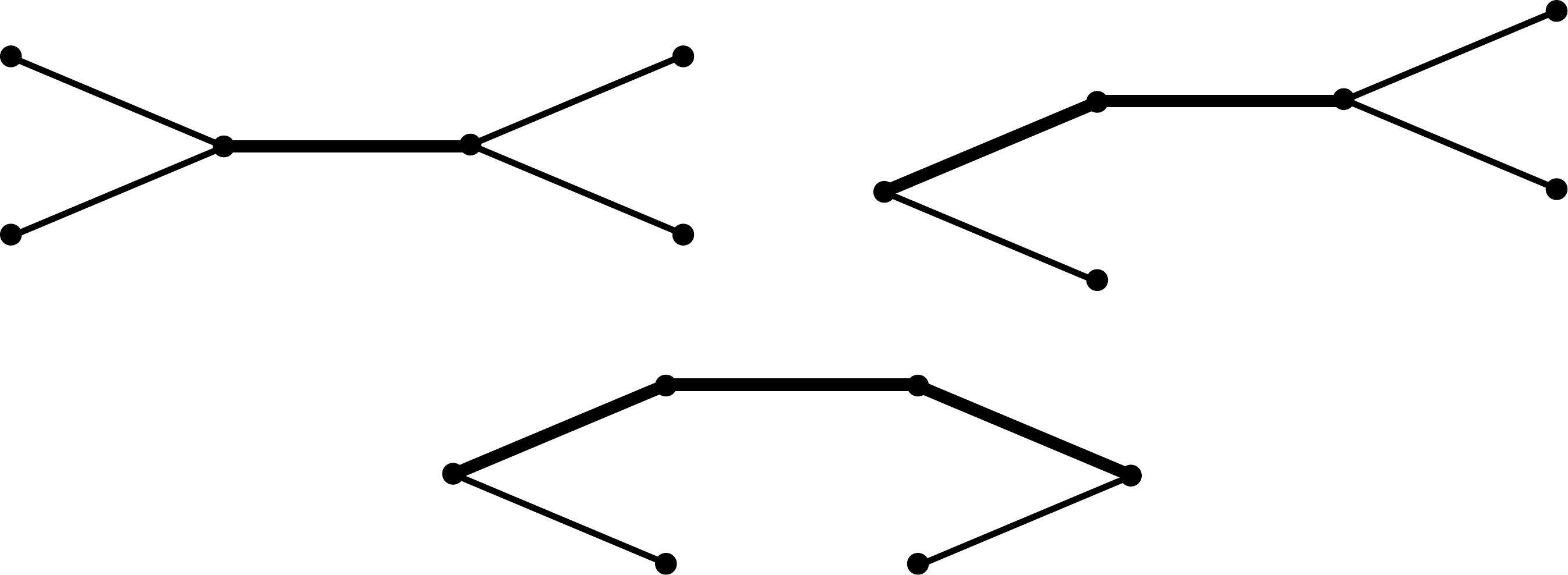}
    \caption{The three possible connection patterns between two stem ends that involve no other stem ends. 
    Upper left: The stem ends are connected directly by a track. 
    Upper right: They are connected via one branch end. 
    Bottom: They are connected via two branch ends.}
    \label{fig:ss}
\end{figure}

\begin{figure}[tbp]
    \centering
    \includegraphics[width=\figwidth]{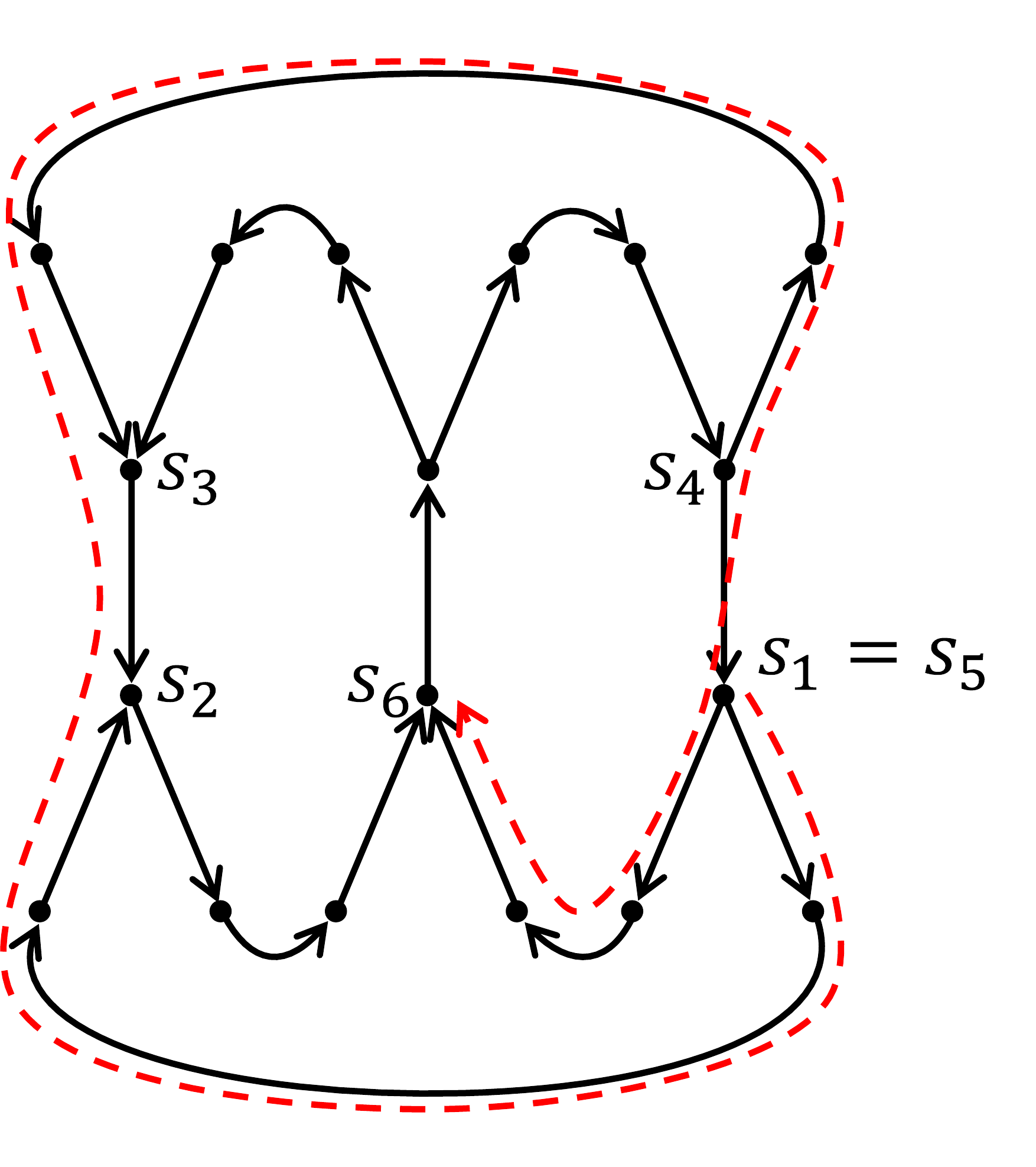}
    \caption{A journey (red dashed line) in a two-way rail network with an orientation. 
    The journey traverses stem ends $s_1, s_2, \ldots, s_6$.
    Note that each consistent segment starts and ends at a stem end rather than at a branch end.}
    \label{fig:journey_s}
\end{figure}

\begin{lemma}
\label{lem:equivalent}
A rail network $G(V,E)$ is one-way if and only if there exists a rail orientation $\mathcal{O}$
under which every switch is either upward or downward.
\end{lemma}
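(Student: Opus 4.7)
The plan is to reduce the global one-way condition to a purely local condition at each switch, exploiting the fact, read off from Definition~\ref{def:double}, that every converging vertex $c$ has exactly two incoming arcs (entering at the branch ends) and exactly one outgoing arc (leaving at the stem end), while every diverging vertex $d$ has one incoming stem arc and two outgoing branch arcs. Under this observation, ``every arc incident to $c$ lies in $\mathcal{A}$'' is precisely the condition that the switch is upward, and the dual statement at $d$ is downward.

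For the ``if'' direction, assuming every switch is upward or downward under $\mathcal{A}$, I would argue by induction along a journey $a_1 a_2 \cdots$ with $a_1 \in \mathcal{A}$. If $a_i \in \mathcal{A}$ ends at a converging vertex $c$, then some arc incident to $c$ lies in $\mathcal{A}$, which is incompatible with the switch being downward (whose $\mathcal{A}$-arcs all sit at $d$); the switch must therefore be upward, so the unique outgoing arc $a_{i+1}$ at $c$ lies in $\mathcal{A}$. The dual argument at a diverging vertex forces the switch to be downward, whence the chosen outgoing branch arc $a_{i+1}$ is again in $\mathcal{A}$. The same reasoning, with upward and downward swapped, handles any journey starting in $\overline{\mathcal{A}}$.

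For the ``only if'' direction, I would fix a one-way orientation $\mathcal{A}$ and an arbitrary switch, and show that the three arcs incident to its converging vertex $c$ share a common orientation. Denote the unique outgoing arc at $c$ by $\sigma$ and the two incoming arcs by $\beta_1, \beta_2$. For each $\beta_i$, I split into two cases: if $\beta_i \in \mathcal{A}$, the one-way property applied to any journey starting with $\beta_i$ forces the very next arc, which is necessarily $\sigma$, into $\mathcal{A}$; if instead $\beta_i \in \overline{\mathcal{A}}$, the same reasoning applied to $\overline{\mathcal{A}}$ forces $\sigma \in \overline{\mathcal{A}}$. Either way $\beta_i$ and $\sigma$ sit in the same orientation, so the triple $\{\beta_1, \beta_2, \sigma\}$ is entirely in $\mathcal{A}$ (yielding upward) or entirely in $\overline{\mathcal{A}}$ (yielding downward, once one recognises that the reverses of these three arcs are exactly the three arcs at $d$).

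The only real obstacle is bookkeeping, not ideas: the pairing $a \leftrightarrow \overline{a}$ swaps arcs between the $c$- and $d$-vertices of the same switch, so the statement ``all three arcs at $c$ lie in $\overline{\mathcal{A}}$'' must be read as ``all three arcs at $d$ lie in $\mathcal{A}$,'' which is precisely downward by Definition~\ref{def:updown}. Keeping that translation in mind, and being careful to treat upward/downward as a property of the switch (not of individual arcs), reduces the argument to a clean local case analysis at a single switch.
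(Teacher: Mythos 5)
Your proof is correct and follows essentially the same route as the paper's: both directions reduce to a local analysis of one-step journeys through a single switch, using the symmetric condition on $\overline{\mathcal{A}}$ to handle arcs outside $\mathcal{A}$. The only cosmetic difference is that you run the case analysis at the converging vertex over the two incoming branch arcs (and make explicit why an $\mathcal{A}$-arc at $c$ is incompatible with a downward switch), whereas the paper cases on the single stem arc into the diverging vertex.
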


\begin{proof}
Assume that $G(V,E)$ is one-way and let $\mathcal{O}$ be its one-way orientation.
For an arbitrary switch $S=(\{s,b,b'\},\{r,r'\})$
let $t_s$ be the track incident to $s$ and consider the journeys $j_1=b r s t_s \dots$ and $j_2=b' r' s t_s \dots$.
By Definition \ref{def:updown} exactly one of
\begin{equation}
  \operatorname{head}\left( \mathcal{O}(r) \right)
  = \operatorname{tail}\left( \mathcal{O}(t_s) \right)
  = \operatorname{head}\left( \mathcal{O}(r') \right) = s
\end{equation}
or
\begin{equation}
  \operatorname{tail}\left( \mathcal{O}(r) \right)
  = \operatorname{head}\left( \mathcal{O}(t_s) \right)
  = \operatorname{tail}\left( \mathcal{O}(r') \right) = s
\end{equation}
holds, so $S$ is upward or downward.

Conversely, suppose there exists a rail orientation $\mathcal{O}$ under which every switch is upward or downward.
Let $j=v_0 e_1 v_1 \cdots e_n v_n$ be a journey and assume,
without loss of generality, that $\operatorname{head}\left( \mathcal{O}(e_1) \right)=v_1$.
Let $s_1,\dots,s_m$ be the (not necessarily distinct) stem ends encountered along $j$.
By Lemma \ref{lem:ss}, the segments of $j$ between successive stem ends are consistent with
$\mathcal{O}$ or $\overline{\mathcal{O}}$ (Figure~\ref{fig:journey_s} illustrates a case of two-way network).
Thus, it suffices to verify for each stem end $s_i$
that the two incident edges $\varepsilon_i$ and $\varepsilon'_i$ are oriented consistently with $\mathcal{O}$.
Let us assume $\varepsilon_i$ and $\varepsilon'_i$ are immediately before and after $s_i$, respectively, 
and $\operatorname{head}\left( \mathcal{O}(\varepsilon_i) \right)=s_i$.
Exactly one of $\varepsilon_i,\varepsilon'_i$ is a track and the other a branch rail.
If $\varepsilon_i$ is a track $t$ and $\varepsilon'_i$ a branch rail $r$,
then $s_i$ is upward and hence $\operatorname{tail}\left( \mathcal{O}(r) \right)=s_i$.
If $\varepsilon_i$ is a branch rail $r$ and $\varepsilon'_i$ a track $t$,
then $s_i$ is downward and thus $\operatorname{tail}\left( \mathcal{O}(t) \right)=s_i$.
Therefore every journey is consistent with $\mathcal{O}$, and the network is one-way.
\end{proof}

\section{Rail network as signed graph}
\label{sec:signed}

As we observed in the proof of Lemma~\ref{lem:equivalent},
the passing direction at each switch in a one-way rail network is determined by a train’s initial heading
and falls into one of two passing types, which we call \emph{upward} and \emph{downward}.  
Along a journey the passing direction is preserved
when the train traverses a track that connects a stem end and a branch end,
but it flips when the track connects two stem ends or two branch ends.  
This property suggests analyzing rail networks by means of the theory of signed graphs.

Signed graphs are graphs whose edges take the binary signs $+$ or $-$.  We adopt the following definition.

\begin{dnt}
A triplet $G(V,E,w)$ is called a \emph{signed graph}
if $(V,E)$ is a graph and $w:E\rightarrow\{+,-\}$ assigns each edge the sign $+$ (positive) or $-$ (negative).  
For $e\in E$ we say $e$ is \emph{positive} when $w(e)=+$ and \emph{negative} when $w(e)=-$.
\end{dnt}

Signed graphs are often used to model social relationships,
where negative edges represent adversarial ties~\cite{cartwright1956structural,10.1145/1753326.1753532,FacchettiIaconoAltafini2011,TraagDoreianMrvar2019}.  
In that context the dictum ``an enemy of an enemy is a friend'' leads to the notion of balance.

\begin{dnt}
A signed graph $G(V,E,w)$ is \emph{balanced} if every cycle in $G$ contains an even number of negative edges.
\end{dnt}

A fundamental result, sometimes called the \emph{structure theorem} of signed graphs, characterises balanced graphs as follows.

\begin{theorem}
\label{th:structure}
A signed graph $G(V,E,w)$ is balanced if and only if $V$ can be partitioned into two disjoint subsets $V_1$ and $V_2$
such that all edges within $V_1$ and $V_2$ are positive,
while every edge between $V_1$ and $V_2$ is negative.
\end{theorem}

See~\cite{Harary1953} for a proof.

Motivated by the change of passing direction discussed above,
we associate a signed graph with each rail network.

\begin{dnt}
\label{def:tracks}
Given a rail network $G(V,E)$, define $w:E\to\{+,-\}$ by
\begin{equation}
  w(e)=
  \begin{cases}
    -, & \text{if $e$ is a track connecting two stem ends or two branch ends},\\
    +, & \text{otherwise}.
  \end{cases}
\end{equation}
The resulting signed graph $G(V,E,w)$ is called the \emph{rail-signed graph} of the rail network.
\end{dnt}

Because the rail-signed graph is uniquely determined by the rail network,
we sometimes refer to it simply by the same symbol $G$.

We now show that the balance of the rail-signed graph exactly captures the one-way property of the rail network,
yielding a convenient parity criterion.
Figure~\ref{fig:vuvd} illustrates the concept of the proof:
a one-way rail network and its separation according to the structure theorem.

\begin{theorem}
\label{th:onewaysigned}
A rail network $G(V,E)$ is one-way if and only if its rail-signed graph $G(V,E,w)$ is balanced.
\end{theorem}

\begin{proof}
Assume first that $G$ is one-way, and let $\mathcal{O}$ be a one-way orientation.  
Assign every vertex the type \emph{upward} or \emph{downward} according to its switch.  
Let $V_u$ and $V_d$ denote the vertex sets of upward and downward switches, respectively.  

If $e$ joins two vertices of $V_u$ (or of $V_d$) then,
because one endpoint is a stem end and the other a branch end,
$e$ must be a track of the ``stem-branch'' kind or a branch rail; in either case $e$ is positive.  
On the other hand, any edge between $V_u$ and $V_d$ necessarily connects
two stem ends or two branch ends and is therefore negative.  
By Theorem~\ref{th:structure}, the rail-signed graph is balanced.

Conversely, suppose that $G(V,E,w)$ is balanced.  
Partition $V$ into $V_u$ and $V_d$ as in Theorem~\ref{th:structure}.  
Vertices of a switch belong to the same group since branch rails are positive.
Since the edges connecting vertices of the same group are positive,
if there is a track within $V_u$ (or $V_d$), it must connect a stem end and a branch end.
In addition, because the edges between $V_u$ and $V_d$ are negative,
they must be tracks connecting two stem ends or two branch ends.

Here, orient every branch rail inside $V_u$ from the stem end toward the branch end,
and every branch rail inside $V_d$ in the opposite direction.  
For a track within $V_u$ (resp.\ $V_d$) orient it from the branch end to the stem end
(resp.\ from the stem end to the branch end).  
Finally, orient each negative track from $V_u$ to $V_d$ if it connects two branch ends,
and from $V_d$ to $V_u$ if it connects two stem ends.  
The resulting orientation $\mathcal{O}$ is a rail orientation
which makes every switch in $V_u$ upward and every switch in $V_d$ downward,
hence $G$ is one-way.
\end{proof}

\begin{figure}[tbp]
  \centering
  \includegraphics[width=9cm]{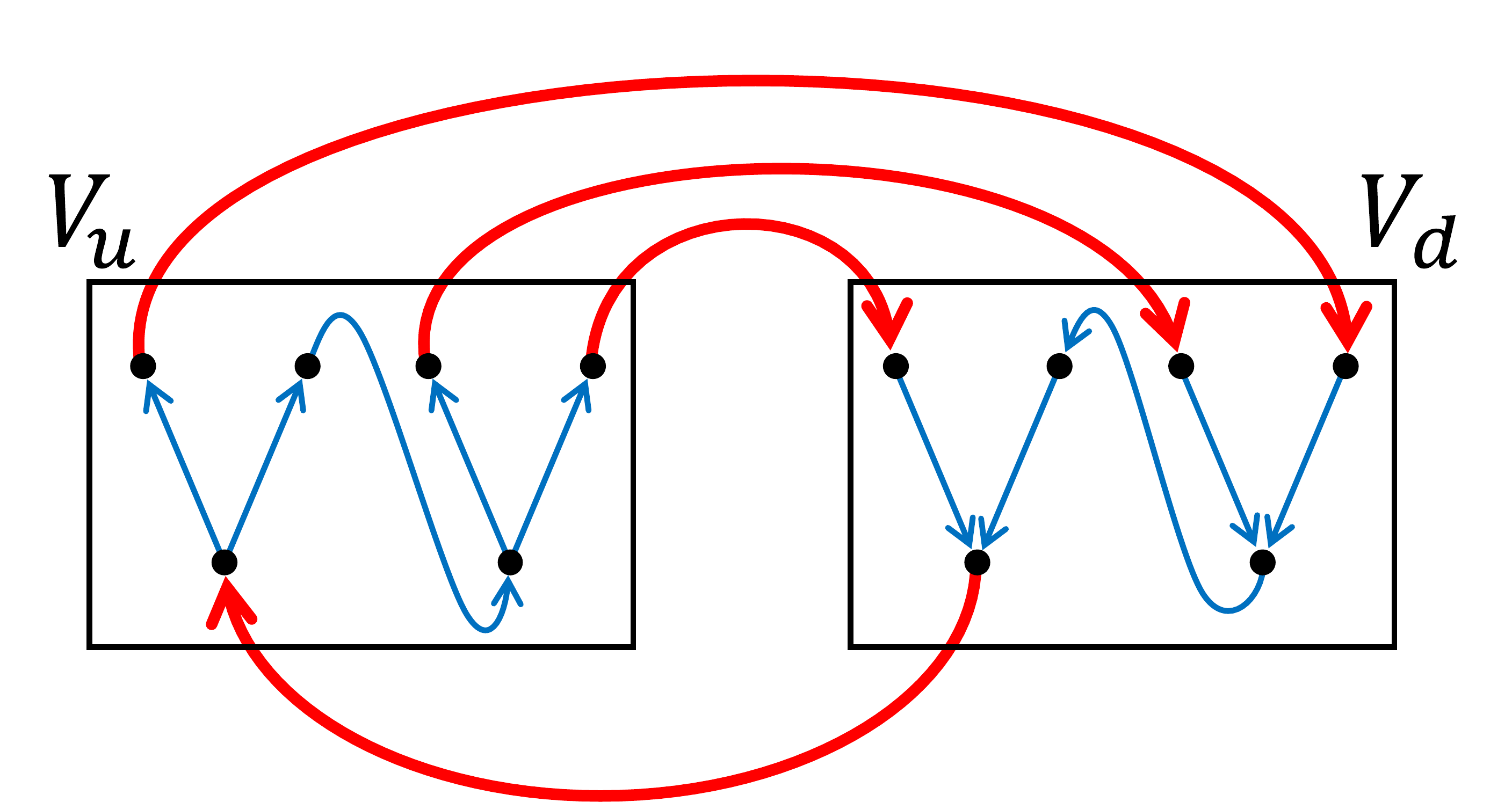}
  \caption{Interpreting a one-way rail network as a balanced signed graph.  
  Positive edges (thin blue) lie within $V_u$ or $V_d$,
  while negative edges (thick red) run between $V_u$ and $V_d$.}
  \label{fig:vuvd}
\end{figure}

\begin{cor}
\label{co:counting}
A rail network is one-way if and only if every cycle contains an even number of negative edges.
\end{cor}

\begin{proof}
Immediate from Theorem~\ref{th:onewaysigned} and the definition of a balanced signed graph.
\end{proof}

Thus, we can decide whether a rail network is one-way by counting the negative tracks in each cycle.
To obtain a visually simpler counting method,
we focus on the special role of \emph{angles} in a rail network.

\begin{dnt}
A sequence $r s r'$ consisting of the stem end $s$ and the branch rails $r,r'$
of the same switch is called an \emph{angle}.
\end{dnt}

An angle is therefore an acute turning point on a path of the rail network
and cannot be part of a journey.
We next show that one-wayness can be detected by counting angles as well.

\begin{theorem}
\label{the:oddeven}
Given a cycle of a rail network $G(V,E)$,
the number of negative tracks in the cycle is odd
if and only if the number of angles in the cycle is odd.  
Consequently, $G$ is one-way if and only if no cycle of $G$ contains an odd number of angles.
\end{theorem}

\begin{proof}
Let the cycle be $v_0 e_0 v_1 e_1 \dots v_n e_n v_0$,
and consider the closed sequence $w=p_0\dots p_n p_0$ where
\begin{equation}
  p_i=
  \begin{cases}
    sr, & \text{if $v_i$ is a stem end and $e_i$ is a branch rail},\\
    st, & \text{if $v_i$ is a stem end and $e_i$ is a track},\\
    br, & \text{if $v_i$ is a branch end and $e_i$ is a branch rail},\\
    bt, & \text{if $v_i$ is a branch end and $e_i$ is a track}.
  \end{cases}
\end{equation}
The sequence $w$ is a closed directed walk in the state diagram of Figure~\ref{fig:statesequence}
which shows possible transitions for the sequence $p_i$.
Thick blue arcs $(st,sr)$ and $(bt,br)$ represent transitions through a negative track;
solid black arcs $(sr,bt)$, $(bt,sr)$, $(st,br)$, $(br,st)$ connect switches with positive tracks;
thick red dashed arc $(br,sr)$ corresponds to angle and is not traversable by a train.
Dividing the nodes into the two groups $\{sr,bt\}$ and $\{st,br\}$,
we see that every closed walk crosses negative tracks and angles in total an even number of times,
so the numbers of negative tracks and angles share the same parity.
\end{proof}

\begin{figure}[tbp]
  \centering
  \includegraphics[width=\dimexpr\figwidth*8/10]{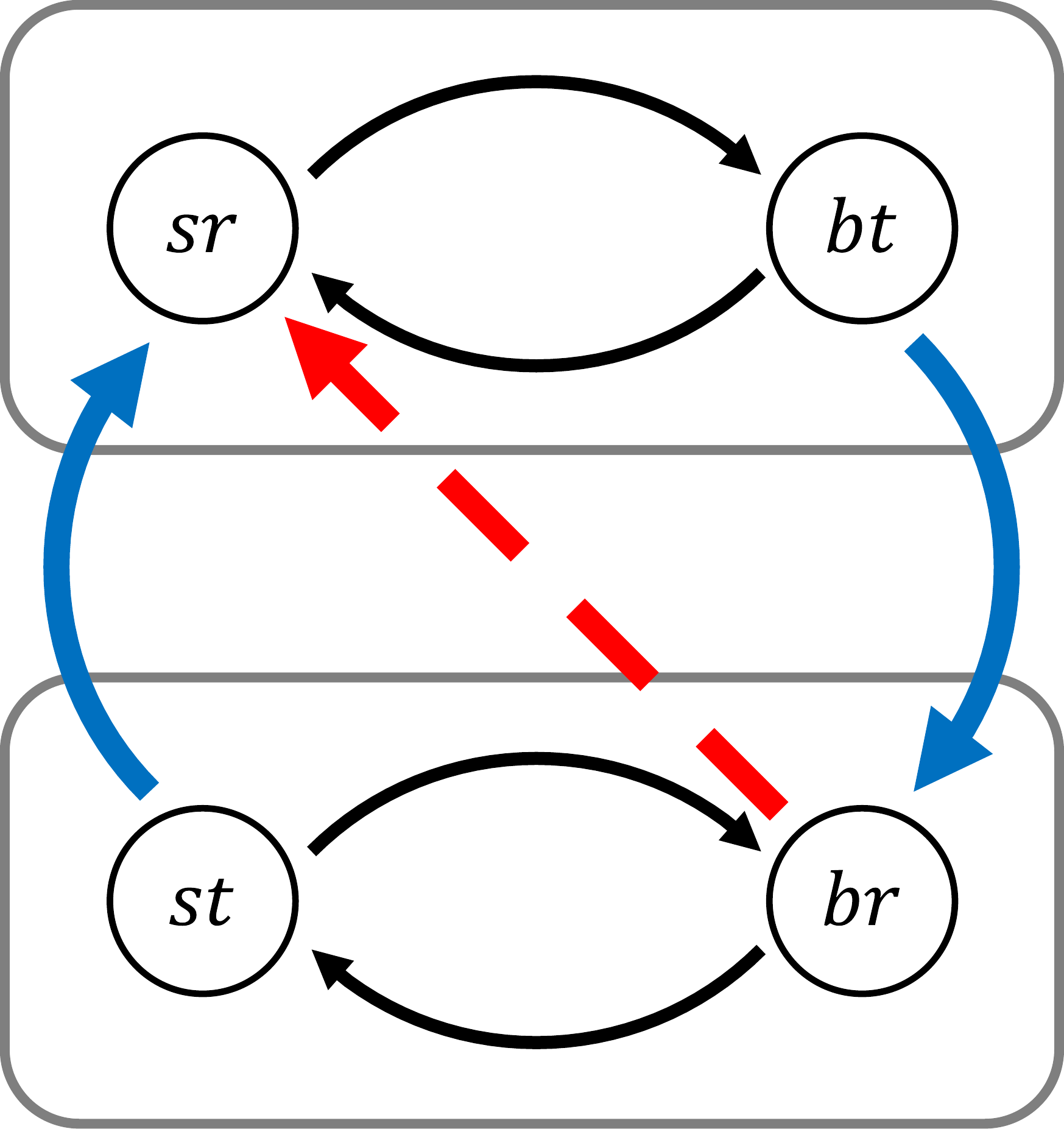}
  \caption{Possible state transitions within a cycle of a rail network.  
  Thick blue arcs: moves along negative tracks;
  solid black arcs: moves along positive tracks;  
  dashed red arc: passage through an angle.}
  \label{fig:statesequence}
\end{figure}

Figure~\ref{fig:rule} illustrates these rules.
In Figure~\ref{fig:rule}a the network is one-way:
All tracks are negative and every cycle contains an even number of negative tracks and angles.
Figure~\ref{fig:rule}b shows a two-way network shaped like the yin-yang symbol.
Two outer tracks are positive and the central track is negative,
so each of the two main cycles contains one negative track and one angle.
Figure~\ref{fig:rule}c modifies Figure~\ref{fig:rule}a by adding a track on the right,
creating a new cycle with three negative tracks and three angles.
The network is therefore no longer one-way.

\begin{figure}[tbp]
  \centering
  \includegraphics[width=\dimexpr\figwidth*2]{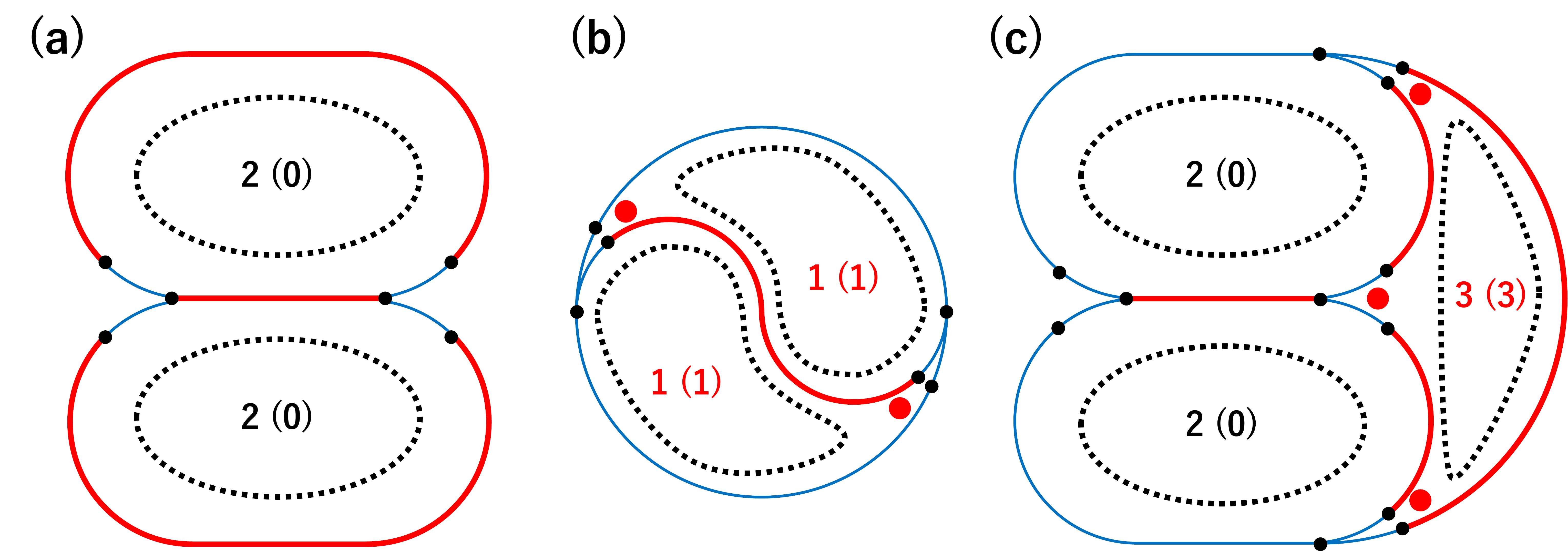}
  \caption{Demonstration of the counting rules.  
  Positive and negative tracks are drawn as thin blue and thick red lines, respectively;
  angles are marked by red dots.  
  Dotted curves highlight the cycles considered.  
  Numbers indicate the count of negative tracks in each cycle and,
  in parentheses, the count of angles.  
  (a) A one-way network with an even count in every cycle.  
  (b), (c) Two-way networks each containing a cycle with an odd count (red numbers).}
  \label{fig:rule}
\end{figure}

Representation of a rail network as a signed graph enables us to apply the theory of signed graphs, whose key theorem provides a necessary and sufficient condition for balance in terms of the Laplacian of a signed graph.

\begin{dnt}
\label{def:laplacian}
Given a signed graph $G(V,E,w)$ with $V=\{v_1,\dots,v_n\}$,
let $a^+_{ij}$ and $a^-_{ij}$ $(i\neq j)$ denote the numbers of positive and negative edges, respectively, between $v_i$ and $v_j$,
and let $a^+_{ii}$ and $a^-_{ii}$ be the numbers of positive and negative loops at $v_i$, respectively.
The \emph{Laplacian} $L=(L_{ij})$ is defined entrywise by
\begin{equation}
  L_{ij}=-(a^+_{ij}-a^-_{ij}),
\end{equation}
for $i \neq j$, and for the diagonal entries, 
\begin{equation}
  L_{ii}=4\,a^-_{ii}+\sum_{j\neq i}\left(a^+_{ij}+a^-_{ij}\right).
\end{equation}
\end{dnt}

The Laplacian matrix is positive semidefinite \cite{zaslavsky2013signedgraphsgeometry},
and its smallest eigenvalue decides the balance if the signed graph is connected.

\begin{theorem}
\label{th:laplacian}
A connected signed graph is balanced if and only if the smallest eigenvalue of its Laplacian is zero.
\end{theorem}

While proofs of this theorem for simple signed graphs can be found in the literature on signed graphs, e.g., \cite{TraagDoreianMrvar2019,Hou01012003},
there are fewer treatments of more general, signed multigraphs.
Zaslavsky's work \cite{zaslavsky2013signedgraphsgeometry}, however, considers
general signed graphs with loops and parallel edges,
and yields the following result\footnote{
More precisely, \cite{zaslavsky2013signedgraphsgeometry} also considered half edges and loose edges,
and Theorem \ref{th:rank} follows directly from Theorems 3.6 and 3.7 of \cite{zaslavsky2013signedgraphsgeometry}.
} from which Theorem \ref{th:laplacian} follows.

\begin{theorem}
\label{th:rank}
Let $G(V, E, w)$ be a signed graph with $n$ vertices, and let $k$ denote the number of balanced connected components.
Then,
\begin{equation}
  \operatorname{rank} L = n - k.
\end{equation}
\end{theorem}

\begin{proof}[Proof of Theorem \ref{th:laplacian}]
If $G(V, E, w)$ is balanced, then $k=1$ and $L$ is not full rank.
Therefore, there exists a zero eigenvalue of $L$.
Conversely, if $L$ has a zero eigenvalue, it follows that $k>0$ by Theorem \ref{th:rank};
since $G$ is connected, necessarily $k=1$, and hence $G(V, E, w)$ itself is balanced.
\end{proof}

We now obtain the following analytic criterion directly from Theorem \ref{th:laplacian}.

\begin{cor}
\label{co:analytic}
A connected rail network is one-way if and only if
the smallest eigenvalue of the Laplacian of the associated rail-signed graph is zero.
\end{cor}

The Laplacian is not only a tool for detecting one-way networks;
it also yields a measure of deviation from the one-way property.
Let $\mathbf{x}$ be a $\{-1, 1\}$-valued vector whose length equals the number of vertices.
The quadratic form $\mathbf{x}^\top L \mathbf{x}$ expands as
\begin{align}
  & \mathbf{x}^\top L \mathbf{x} = \sum_i \sum_j x_i x_j L_{ij} \notag \\
  & = \sum_i x_i^2 \left( 4 a^-_{ii} + \sum_{j \neq i} \left( a^+_{ij} + a^-_{ij} \right) \right)
  - \sum_i \sum_{j \neq i} x_i x_j \left( a^+_{ij} - a^-_{ij} \right) \notag \\
  & = 4 \sum_i a^-_{ii} + \sum_i \sum_{j > i} \left( a^+_{ij} \left( x_i - x_j \right)^2 + a^-_{ij} \left( x_i + x_j \right)^2 \right) \notag \\
  & = 4 \left( \sum_i a^-_{ii} + \sum_i \sum_{j > i} \left( a^+_{ij} \chi_{x_i \neq x_j} + a^-_{ij} \chi_{x_i = x_j} \right) \right),
\end{align}
where $\chi_P$ equals 1 if $P$ holds and 0 otherwise.
The formula means that the quadratic form is equal to 
four times the quantity known as the \emph{line index of imbalance}
(also called the \emph{frustration index})\cite{TraagDoreianMrvar2019,HararyNormanCartwright1965}\footnote{
See also \cite{TraagDoreianMrvar2019}.
However, the step from (8.29) to (8.30) in the paper is off by a factor of two:
only one of $(i,j)$ and $(j,i)$ is counted and (8.30) states that
$x^\top L x$ equals twice the line index of imbalance rather than four times.
},
which is the number of edges violating the partition rule of balance
in the partitioning $V_1 = \{v_i | x_i = 1\}$ and $V_2 = \{v_i | x_i = -1\}$.
Therefore, the minimum value of $\mathbf{x}^\top L \mathbf{x}$ is zero for a balanced signed graph
and larger for a signed graph that deviate more from balance,
suggesting a quantitative measure of the bidirectionality of a rail network.

\section{Concluding remarks}

We have proposed a graph-theoretic framework for analysing directionality in toy rail networks.
By assigning a sign to each track based on the type of connection,
we introduced the \emph{rail-signed graph} of a network
and showed that its balance is equivalent to the one-way property.
This equivalence provides novel methods to determine one-wayness, namely,
parity tests based on the numbers of negative tracks or angles in every cycle.
Furthermore, the smallest eigenvalue of the signed Laplacian gives an analytical way
to assess one-wayness and offers a quantitative measure of how far a given network deviates from it.
Applying this theory to real transport systems may offer new insights into the design of unidirectional transport networks.

\section*{Acknowledgements}
We would like to thank the Hatakeyama children for inviting us to play with their train sets, inspiring us to ask these questions, and Dr. Wataru Komatsubara for providing us with Plarails and fun while working out an answer.


\end{document}